\newtheorem{theorem}{Theorem}[section]
\newtheorem{lemma}[theorem]{Lemma}
\newtheorem{proposition}[theorem]{Proposition}
\theoremstyle{definition}
\newtheorem{example}[theorem]{Example}
\newtheorem{remark}[theorem]{Remark}
\numberwithin{equation}{section}
\begin{document}

\title[Further properties of involutory and idempotent matrices]{Further properties of involutory and idempotent matrices}

\author[F. Mirzapour]{ F. Mirzapour}

\address{ Department of Mathematics, Faculty of Sciences, University of Zanjan, P.O. Box 45371-38791, Zanjan, Iran.}

\email{f.mirza@znu.ac.ir}

\author[A. Mirzapour]{A. Mirzapour}
\address{Department of Electrical and Computer, Zanjan branch, Islamic Azad University, Zanjan, Iran}

\email{aminmirzapour60@gmail.com}

\subjclass[2010]{Primary 15A24; Secondary 65F30.}

\keywords{root, involutary, idempotent}

\begin{abstract}
In this paper, we will derive the real roots of certain sets of matrices with real entries. We will also demonstrate that real orthogonal matrices can have real root or be involutory. Eventually, we will represent idempotent matrices in a block form.
\end{abstract}
\maketitle


\section{Introduction  and Preliminaries}
Let $\mathcal{M}_n$ denote the $C^*-$algebra of all $n-$square matrices. Following is the list of special subsets of  $\mathcal{M}_n$:
\begin{itemize}
\item[•] $\mathcal{O}_n$: The group of orthogonal matrices,
\item[•] $\mathcal{U}_n$: the group of all unitary matrices,
\item[•] $\mathcal{S}_n$: the set of symmetric matrices,
\item[•] $\mathcal{I}_n$: the set of all involutory matrices, 
\item[•] $\mathcal{P}_n$: the set of all idempotent matrices.
\end{itemize}
We know that two matrices $A$ and $B$ are similar, if there exists an invertible matrix T such that $A=T^{-1}BT$, and it is diagonalizable if there exist $\lambda_1,\ldots,\lambda_n$ such that $A=T^{-1}\mathrm{diag}(\lambda_1,\ldots,\lambda_n)T$ and unitarily diagonalizable if $T=U\in\mathcal{U}_n$, $A=U^*\mathrm{diag}(\lambda_1,\ldots,\lambda_n)U$. From the other side, matrix $C$ is a root of $A$, if $A=C^2$, and it can also be said that $A$ is root - approximable if there exists a sequence $\{C_k\}$ such that $C_k\longrightarrow I$ and $C_k^{2^k}=A$, for each $k=0,1,2,\ldots$ \cite{Chad4}, \cite{Magh7}. Matrix functions have been studied in \cite{Bhat6}, \cite{Horn9}, \cite{Nich2}. In this paper, the matrix function $f(A)=\sqrt{A}$ for specific matrices is studied.\\    
The square matrix $A$ is said to be idempotent or projection, if
$A^2 = A,$
and involutory if
$A^2 = I.$
In this article, we need the following propositions which are from \cite{High3}, \cite{Zhan1}.
\begin{proposition}
Let $A$ be an $n-$square complex matrix. Then
\begin{itemize}
\item[1.] $A$ is idempotent if and only if $A$ is similar to a diagonal matrix
of the form $\mathrm{diag}(1, . . . , 1, 0, . . . , 0)$.
\item[2.] A is involutory if and only if A is similar to a diagonal matrix
of the form $\mathrm{diag}(1, . . . , 1,-1, . . . ,-1)$.
\end{itemize}
\end{proposition}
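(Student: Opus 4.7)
The plan is to reduce both parts to the standard diagonalizability criterion: a matrix over $\mathbb{C}$ is similar to a diagonal matrix if and only if its minimal polynomial is a product of distinct linear factors. The two directions of each equivalence are then immediate, with the conceptual work concentrated in the ``only if'' direction.

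For the forward implication in part (1), I would observe that if $A^2 = A$ then the polynomial $p(x) = x^2 - x = x(x-1)$ annihilates $A$, so the minimal polynomial of $A$ divides $x(x-1)$. Since this polynomial has distinct linear factors, $A$ is diagonalizable, and its eigenvalues lie in $\{0,1\}$. Grouping equal eigenvalues in the diagonal form yields exactly $\mathrm{diag}(1,\ldots,1,0,\ldots,0)$ up to similarity. For the reverse implication, if $A = T^{-1} D T$ with $D = \mathrm{diag}(1,\ldots,1,0,\ldots,0)$, then $D^2 = D$, so $A^2 = T^{-1} D^2 T = T^{-1} D T = A$.

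Part (2) proceeds in the same fashion, replacing the annihilating polynomial by $q(x) = x^2 - 1 = (x-1)(x+1)$. Since $\mathrm{char}(\mathbb{C}) \neq 2$, this again factors into distinct linear factors, so $A^2 = I$ forces $A$ to be diagonalizable with spectrum in $\{1,-1\}$, giving the claimed block form after reordering. The converse is a direct calculation with the diagonal matrix.

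If one prefers to avoid invoking the minimal polynomial theorem as a black box, an alternative is a direct decomposition argument: for the idempotent case, write any $v \in \mathbb{C}^n$ as $v = Av + (I-A)v$ and note $Av \in \ker(A-I)$ while $(I-A)v \in \ker(A)$, giving $\mathbb{C}^n = \ker(A-I) \oplus \ker(A)$; choosing bases of the summands produces $T$. The involutory case uses $v = \tfrac{1}{2}(I+A)v + \tfrac{1}{2}(I-A)v$ analogously. There is no real obstacle here --- the only subtlety is ensuring that the two summands are genuinely complementary (intersection zero and sum the whole space), which follows from the explicit formulas above. I would choose whichever presentation is shortest for the paper's level.
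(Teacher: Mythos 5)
Your proposal is correct: the paper itself offers no proof of this proposition, quoting it from the cited references (Higham and Zhang), so there is nothing internal to compare against. Both of your routes --- the minimal polynomial dividing $x(x-1)$ resp.\ $(x-1)(x+1)$, or the explicit splitting $\mathbb{C}^n=\ker(A-I)\oplus\ker(A)$ resp.\ $\ker(A-I)\oplus\ker(A+I)$ via $v=Av+(I-A)v$ and $v=\tfrac12(I+A)v+\tfrac12(I-A)v$ --- are the standard textbook arguments, and your check that the summands intersect trivially is the only point needing care; either version would serve as a complete proof here.
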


\begin{proposition}
Let $A$ and $B$ be real square matrices of the same
size. If $P$ is a complex invertible matrix such that $P^{-1}AP = B$, then
there exists a real invertible matrix $Q$ such that $Q^{-1}AQ = B$.
\end{proposition}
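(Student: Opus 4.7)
The plan is to exploit the fact that a nonzero polynomial over $\mathbb{R}$ has only finitely many roots, while $\mathbb{R}$ is infinite, to extract a real similarity from a complex one by parameterizing along a real line in the space of matrices.

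First I would split $P$ into its real and imaginary parts, writing $P = R + iS$ with $R, S \in \mathcal{M}_n(\mathbb{R})$. The similarity relation $P^{-1}AP = B$ rewritten as $AP = PB$ becomes
\begin{equation*}
AR + i(AS) = RB + i(SB).
\end{equation*}
Since $A, B, R, S$ are all real, comparing real and imaginary parts gives the two real intertwining identities $AR = RB$ and $AS = SB$. Consequently, for every scalar $t$, the real matrix $R + tS$ satisfies $A(R + tS) = (R + tS)B$.

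Next I would argue that some real $t$ makes $R + tS$ invertible. Consider the function $f(t) = \det(R + tS)$. Expanding the determinant shows $f$ is a polynomial in $t$ of degree at most $n$. Crucially, $f(i) = \det(P) \neq 0$, so $f$ is not the zero polynomial. Therefore $f$ has at most $n$ roots in $\mathbb{C}$, hence at most finitely many real roots, and we can pick $t_0 \in \mathbb{R}$ with $f(t_0) \neq 0$. Setting $Q = R + t_0 S$ yields a real invertible matrix with $AQ = QB$, i.e.\ $Q^{-1}AQ = B$, as required.

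The only conceptual step that could be called an obstacle is recognizing that $\det(R + tS)$ is a polynomial in $t$, and that it is nonzero because it takes the nonzero value $\det(P)$ at $t = i$. Everything else is routine bookkeeping. The same idea also explains why the argument works over any infinite field extension, not just $\mathbb{C}/\mathbb{R}$, since the only property of $\mathbb{R}$ used is that it is infinite.
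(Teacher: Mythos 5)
Your proof is correct: the decomposition $P=R+iS$, the observation that $AR=RB$ and $AS=SB$, and the polynomial argument on $\det(R+tS)$ (nonzero at $t=i$, hence at some real $t_0$) together give exactly what is needed. The paper itself states this proposition without proof, citing the literature (Zhang's \emph{Matrix Theory}), and your argument is precisely the standard one given there, so there is nothing to add.
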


\begin{proposition}
Let $A$ and $B$ be real square matrices of the same
size. If $A = UBU^*$ for some unitary matrix $U$, then there exists a
real orthogonal matrix $Q$ such that $A = QBQ^T$.
\end{proposition}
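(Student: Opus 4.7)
The plan is to extract the desired real orthogonal $Q$ from the polar decomposition of a well-chosen real matrix that intertwines $A$ and $B$ exactly as $U$ does. First I would rewrite the hypothesis $A=UBU^{*}$ as $AU=UB$, split $U=R+iS$ with $R,S$ real, and equate real and imaginary parts to obtain $AR=RB$ and $AS=SB$. Applying the same procedure to $U^{*}A=BU^{*}$ (which follows from multiplying $A=UBU^{*}$ on the left by $U^{*}$) produces $R^{T}A=BR^{T}$ and $S^{T}A=BS^{T}$, i.e., after transposition, $A^{T}R=RB^{T}$ and $A^{T}S=SB^{T}$. Linearity then gives, for every real $t$, a real matrix $N(t):=R+tS$ satisfying simultaneously
\[
AN(t)=N(t)B,\qquad A^{T}N(t)=N(t)B^{T}.
\]

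Next I would note that $\det N(t)$ is a polynomial in $t$ with $\det N(i)=\det U\neq 0$, so it is not identically zero and some real $t_{0}$ makes $N:=N(t_{0})$ invertible. This already recovers Proposition~1.2 via $A=NBN^{-1}$; the remaining task is to upgrade $N$ to an orthogonal matrix without destroying the intertwining.

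The crucial observation is that $N^{T}N$ commutes with $B$. Indeed, $N^{T}NB=N^{T}(AN)$ using $NB=AN$, while transposing $A^{T}N=NB^{T}$ gives $N^{T}A=BN^{T}$, whence $BN^{T}N=(BN^{T})N=N^{T}AN$ as well. Taking the real polar decomposition $N=QP$ with $Q$ orthogonal and $P$ symmetric positive definite, the principal square root $P=(N^{T}N)^{1/2}$ is a polynomial in $N^{T}N$ and therefore commutes with $B$. Substituting into $AN=NB$ yields $AQP=QPB=QBP$, and cancelling the invertible factor $P$ gives $AQ=QB$, whence $A=QBQ^{T}$.

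The main obstacle is the last step: one really does need both intertwining identities $AN=NB$ and $A^{T}N=NB^{T}$ in order to force $N^{T}N$ (and hence $P$) to commute with $B$, and this is precisely where the unitarity of $U$ enters, rather than just its invertibility. Everything else is essentially bookkeeping once the right one-parameter family $N(t)$ has been identified and its \emph{two} flavors of intertwining are in hand.
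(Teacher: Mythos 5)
Your argument is correct and complete: the two intertwining relations $AN=NB$ and $A^{T}N=NB^{T}$ (the second coming from unitarity via $U^{*}A=BU^{*}$), the choice of a real $t_{0}$ making $N=R+t_{0}S$ invertible because $\det(R+tS)$ is a nonzero polynomial, and the fact that the polar factor $P=(N^{T}N)^{1/2}$ commutes with $B$ so that $P$ can be cancelled to give $AQ=QB$, all check out. The paper itself offers no proof of this proposition --- it is quoted from the cited references --- and your argument is essentially the standard one found there, so there is nothing to add.
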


\begin{proposition}
Every real orthogonal matrix is real orthogonally similar
to a direct sum of real orthogonal matrices of order at most $2$.
\end{proposition}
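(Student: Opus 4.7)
The plan is to exploit the spectral theorem for unitary matrices together with Proposition 1.3, which will convert a complex unitary similarity into a real orthogonal one, so that no explicit construction of a real change of basis is required.

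First I would note that a real orthogonal matrix $A$ is unitary, hence normal, so by the spectral theorem there exists a unitary $U$ with $U^{*}AU=\mathrm{diag}(\lambda_{1},\ldots,\lambda_{n})$, where $|\lambda_{j}|=1$ for each $j$. Since $A$ is real, its characteristic polynomial has real coefficients, so the non-real eigenvalues occur in complex conjugate pairs $e^{\pm i\theta}$ with $\theta\in(0,\pi)$, while the real eigenvalues are $\pm 1$. Let $p$ and $q$ denote the multiplicities of $1$ and $-1$, and let $e^{\pm i\theta_{1}},\ldots,e^{\pm i\theta_{r}}$ enumerate the non-real eigenvalues with multiplicity, so that $p+q+2r=n$.

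Next I would introduce the real block-diagonal candidate
\[
B := I_{p}\oplus(-I_{q})\oplus R(\theta_{1})\oplus\cdots\oplus R(\theta_{r}),
\qquad
R(\theta)=\begin{pmatrix}\cos\theta & -\sin\theta \\ \sin\theta & \cos\theta\end{pmatrix}.
\]
Each summand is a real orthogonal matrix of order $1$ or $2$, so $B$ itself is real orthogonal with exactly the required block form, and a direct computation shows that $R(\theta_{k})$ has eigenvalues $e^{\pm i\theta_{k}}$. Hence $A$ and $B$ have the same multiset of eigenvalues.

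The central step is then to observe that both $A$ and $B$ are normal matrices with identical spectra, so each is unitarily similar to the common diagonal matrix of their eigenvalues; composing these two unitary similarities produces a unitary $W$ with $A=WBW^{*}$. Because $A$ and $B$ are real, Proposition 1.3 upgrades this complex unitary similarity to a real orthogonal one, yielding a real orthogonal $Q$ with $A=QBQ^{T}$, which is precisely the desired conclusion. The only delicate point in the argument is the bookkeeping involved in matching eigenvalue multiplicities between $A$ and $B$; this is automatic because $A$ is real and therefore its non-real eigenvalues are genuinely paired by conjugation, after which Proposition 1.3 carries the argument across the real line without further calculation.
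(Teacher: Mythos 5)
Your argument is correct. Note, though, that the paper does not prove this proposition at all: it is listed among the preliminaries and attributed to the references (Higham; Zhang), where the standard proof runs quite differently — one takes a non-real eigenvalue $e^{i\theta}$ with eigenvector $x=u+iv$, checks that the real span of $u,v$ is a two-dimensional $A$-invariant subspace on which $A$ acts as a rotation (real eigenvalues $\pm 1$ giving one-dimensional invariant subspaces), and then inducts on the orthogonal complement, building the real orthogonal change of basis explicitly; equivalently one invokes the real Schur form. Your route is a genuine alternative: you only need the eigenvalue bookkeeping (eigenvalues of modulus one, non-real ones in conjugate pairs), the elementary fact that two normal matrices with the same multiset of eigenvalues are unitarily similar, and then Proposition 1.3 to convert the unitary similarity between the two real matrices $A$ and $B=I_p\oplus(-I_q)\oplus R(\theta_1)\oplus\cdots\oplus R(\theta_r)$ into a real orthogonal one. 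This is slicker and avoids any hands-on construction of real invariant subspaces, at the cost of resting the whole weight on Proposition 1.3, which is itself a nontrivial result the paper also quotes without proof (its usual proof splits the unitary $U$ into real and imaginary parts and uses a polar decomposition, and does not rely on the present proposition, so there is no circularity). The classical invariant-subspace proof is self-contained and produces the orthogonal matrix $Q$ explicitly, which is preferable if one wants an algorithmic decomposition; your proof is shorter and fits naturally into the paper's framework since Proposition 1.3 is already assumed.
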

\section{Involution matrices}
In this section, we obtain some properties of the involutory matrices and by applying them we derive the real root of some special matrices. We start with matrices of order 2.
\begin{lemma} 
The class of all real involutory matrices of order 2 is as follows:
\begin{align*}
\left\{ \left(\begin{array}{cc}a & b\\ \frac{1-a^2}{b} & -a\end{array}\right);~ a,b\in\mathbb{R},~b\neq 0\right\}\bigcup \left\{\left(\begin{array}{cc}\pm 1 & 0\\ c & \mp 1\end{array}\right);~ c\in\mathbb{R}\right\}\bigcup\left\{ \pm I_2\right\}.
\end{align*}
\end{lemma}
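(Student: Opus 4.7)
My plan is to write a general real $2\times 2$ matrix $A=\begin{pmatrix} a & b \\ c & d\end{pmatrix}$, impose $A^{2}=I_{2}$ coordinatewise, and then carry out a short case split on the entries. No appeal to the similarity propositions of the preliminaries is needed here; the defining equations can be solved directly in the $2\times 2$ case.

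First I would compute $A^{2}$ and equate it with $I_{2}$, obtaining the system
\begin{align*}
a^{2}+bc &= 1,\\
b(a+d) &= 0,\\
c(a+d) &= 0,\\
bc+d^{2} &= 1.
\end{align*}
Subtracting the first equation from the fourth gives $a^{2}=d^{2}$, while the middle two equations pivot the analysis on whether $a+d$ vanishes.

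Next I would split into cases. If $a+d\neq 0$, then the off-diagonal equations force $b=c=0$; together with $a^{2}=d^{2}$ and $a+d\neq 0$, this yields $a=d=\pm 1$, producing the singleton family $\{\pm I_{2}\}$. If $a+d=0$, then $d=-a$ and the only remaining constraint is $bc=1-a^{2}$. I would subdivide this second case according to whether $b\neq 0$, in which case $c=(1-a^{2})/b$ is determined, giving the first parametric family of the statement; or $b=0$, in which case $bc=0$ forces $a^{2}=1$, and $c$ is then a free real parameter, producing the second parametric family.

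The main ``obstacle'' is really just bookkeeping: making sure the three subcases together exhaust all real solutions, and that degenerate boundaries (such as $\pm I_{2}$ and $\pm\mathrm{diag}(1,-1)$ appearing as limits of the parametric families) are assigned cleanly to one of the three listed pieces. Once the dichotomy $a+d=0$ versus $a+d\neq 0$ is introduced and the sub-case $b=0$ is isolated, the classification is immediate.
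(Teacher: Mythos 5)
Your proposal is correct and complete. The paper in fact states this lemma without any proof, so there is nothing to compare against; your direct verification — squaring a general $A=\left(\begin{array}{cc}a & b\\ c & d\end{array}\right)$, reading off the system $a^{2}+bc=1$, $b(a+d)=0$, $c(a+d)=0$, $bc+d^{2}=1$, and splitting on $a+d\neq 0$ versus $a+d=0$ (with the sub-case $b=0$ forcing $a^{2}=1$ and leaving $c$ free) — is exactly the routine argument the authors evidently left to the reader, and your three cases exhaust all solutions while each listed family is visibly involutory, so the two sets coincide.
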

\begin{lemma} 
The class of all real matrices like $A$ such that $A^2=-I_2$ is as follows:
\begin{align*}
\left\{ \left(\begin{array}{cc}a & -b\\ \frac{1+a^2}{b} & -a\end{array}\right);~ a,b\in\mathbb{R},~b\neq 0\right\}.
\end{align*}
\end{lemma}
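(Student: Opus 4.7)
The plan is to proceed by a direct characterization, mirroring the proof of the previous lemma. I would write a generic real matrix $A = \begin{pmatrix} p & q \\ r & s \end{pmatrix}$, impose $A^2 = -I_2$, and show that the entries are forced into the claimed shape (with $a = p$ and $b = -q$).

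First I would extract two global constraints. Since $A$ satisfies $X^2 + 1 = 0$ and this polynomial has no real roots, no real eigenvalue of $A$ can exist; yet $A$ is $2 \times 2$ with real entries, so its minimal polynomial must be exactly $X^2 + 1$, which then equals the characteristic polynomial. This immediately gives $\operatorname{tr}(A) = 0$ and $\det(A) = 1$, i.e.\ $s = -p$ and $-p^2 - qr = 1$.

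Next I would handle the non-degeneracy. The relation $qr = -(1+p^2)$ shows the right-hand side is strictly negative for every real $p$, so $q \neq 0$ and $r \neq 0$. Setting $a := p$ and $b := -q$, we obtain $b \neq 0$, $s = -a$, $q = -b$, and $r = (1 + a^2)/b$, which is exactly the form in the statement. Conversely, a direct multiplication check on the parametrized matrix confirms that every such matrix squares to $-I_2$, giving the reverse inclusion.

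The main obstacle, such as it is, lies not in computation but in ruling out edge cases of the parametrization: one must argue that the case $b = 0$ (equivalently $q = 0$) cannot occur over $\mathbb{R}$, since this would force the $(1,1)$ entry of $A^2$ to equal $p^2 \geq 0$, contradicting $A^2 = -I_2$. This is precisely the point at which the argument diverges from the preceding lemma on involutory matrices, where $b = 0$ was admissible and produced the extra lower-triangular family; here the sign in $-I_2$ excludes that branch, leaving a single uniform family.
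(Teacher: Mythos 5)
Your proposal is correct, and in fact the paper states this lemma without any proof at all (as it does for Lemma 2.1), so there is no argument of the paper's to deviate from; your write-up supplies a complete verification. Both directions are handled: the minimal-polynomial argument forces $\operatorname{tr}(A)=0$ and $\det(A)=1$ (the minimal polynomial divides $X^2+1$, which is irreducible over $\mathbb{R}$ and cannot have degree one since $A=cI$ would need $c^2=-1$), and the resulting relation $qr=-(1+p^2)<0$ correctly rules out $q=0$, which is exactly why no analogue of the triangular family $\left(\begin{smallmatrix}\pm 1 & 0\\ c & \mp 1\end{smallmatrix}\right)$ from Lemma 2.1 appears here. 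The converse check by direct multiplication closes the equivalence. A marginally more elementary route, closer in spirit to how the paper presumably intends Lemma 2.1, is to expand $A^2=-I_2$ entrywise: $p^2+qr=-1$ forces $q\neq 0$ at once, and then $q(p+s)=0$ gives $s=-p$ without invoking minimal or characteristic polynomials; this buys nothing beyond avoiding the eigenvalue machinery, while your version generalizes more readily (e.g.\ it explains structurally why real solutions of $A^2=-I$ exist only in even dimensions). Either way, the statement is established.
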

Herein after we refer to 
$
\left(\begin{array}{cc}a & -b\\ \frac{1+a^2}{b} & -a\end{array}\right)
$
as $\Psi(a,b)$.
\begin{remark}
In the lemma (2.1), if $|a|\leq 1$ and $b=\sqrt{1-a^2}$, then 
\begin{align*}
\left(\begin{array}{cc}\cos\theta & \sin\theta\\ \sin\theta & -\cos\theta\end{array}\right);~~\theta\in\mathbb{R}.
\end{align*}
\end{remark}
\begin{remark}
$\left(\begin{array}{cc}\pm 1 & 0\\ 0 & \pm i\end{array}\right)$ are the only roots of $\left(\begin{array}{cc} 1 & 0\\ 0 & -1\end{array}\right)$.
\end{remark}
\begin{theorem}
Suppose $A$ is a real involutory matrix of order $n$ and $\det A>0$, then $A$ has a real root. 
\end{theorem}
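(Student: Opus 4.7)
The plan is to reduce to a canonical block form and then build the square root block by block.

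First I would invoke Proposition~1.1(2) to write $A = P^{-1}DP$ over $\mathbb{C}$, where $D = \mathrm{diag}(1,\dots,1,-1,\dots,-1)$ has, say, $k$ ones and $n-k$ minus-ones on the diagonal. Since $A$ and $D$ are both real, Proposition~1.2 upgrades this to a real similarity $A = Q^{-1}DQ$ with $Q$ real and invertible. This is the step that costs nothing and gives me freedom to work with $D$ instead of $A$.

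Next I would read off the parity of $n-k$ from the determinant hypothesis: $\det A = \det D = (-1)^{n-k}$, so $\det A > 0$ forces $n-k$ to be even. This is the crucial place where the positivity of the determinant enters; without it the block of $-1$'s would have odd size and the strategy below would fail.

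Now I would construct an explicit real square root of $D$. The $I_k$ block is its own square root. For the $-I_{n-k}$ block, I pair the $-1$'s into $(n-k)/2$ blocks of size $2$, and square-root each $-I_2$ by the matrix $\Psi(0,1) = \left(\begin{smallmatrix} 0 & -1 \\ 1 & 0 \end{smallmatrix}\right)$ from Lemma~2.2, which satisfies $\Psi(0,1)^2 = -I_2$. Setting
\begin{align*}
C_0 \;=\; I_k \,\oplus\, \underbrace{\Psi(0,1) \oplus \cdots \oplus \Psi(0,1)}_{(n-k)/2 \text{ copies}},
\end{align*}
I obtain a real matrix with $C_0^2 = D$. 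Finally, $C := Q^{-1}C_0Q$ is real and satisfies $C^2 = Q^{-1}DQ = A$, so $C$ is the desired real root.

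The only real obstacle is the parity observation in the middle step; everything else is assembling ingredients already on the table (the two real-form propositions and Lemma~2.2). Accordingly, I would spend one sentence justifying the similarity reductions, one sentence extracting the parity of $n-k$, and then exhibit $C_0$ and $C$ explicitly.
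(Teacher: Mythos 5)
Your proposal is correct and follows essentially the same route as the paper: reduce to the real canonical form $\mathrm{diag}(1,\dots,1,-1,\dots,-1)$ via Propositions~1.1 and~1.2, use $\det A>0$ to get an even number of $-1$'s, and square-root each paired $-I_2$ block with a matrix $\Psi(a,b)$ (the paper allows arbitrary parameters where you fix $\Psi(0,1)$) before conjugating back. No gaps; the argument matches the paper's proof in substance.
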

\begin{proof}
 Since $A$ is a real involutory matrix, then by propositions (1.1) and (1.2), there is an invertible real matrix $B$ such that 
$$
A=B^{-1}\mathrm{diag}(1,\ldots,1,-1,\ldots,-1)B,
$$
 thus $\det A=1$ or $\det A=-1$. If $\det A=1$, then the number of eigenvalues $-1$ is even. Therefore 
\begin{align*} 
 C&=\mathrm{diag}(1,\ldots,1,-1,\ldots,-1)\\
 &=I_k\oplus\left(\begin{array}{cc}-1 & 0\\ 0 & -1 \end{array}\right)\oplus\cdots\oplus\left(\begin{array}{cc}-1 & 0\\ 0 & -1 \end{array}\right),
\end{align*}  
 has many real roots, for instance, for arbitrary real numbers $a_1,\ldots, a_t$ and non-zero real numbers $b_1,\ldots, b_t$, if
\begin{align*} 
D=\mathrm{diag}(\pm 1,\cdots,\pm 1)\oplus\Psi(a_1,b_1)\oplus\cdots\oplus\Psi(a_t,b_t),
\end{align*}
then $A=B^{-1}D^2B=(B^{-1}DB)^2$. 
\end{proof}
\begin{theorem}
Let $A$ be a real symmetric matrix of size $n$ with the eigenvalues $\lambda_1\geq\cdots\geq\lambda_n$. If every negative eigenvalue is repeated twice or even times, then $A$ has a real root. 
\end{theorem}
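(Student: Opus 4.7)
The plan is to reduce to a block-diagonal problem via the spectral theorem and then use Lemma 2.2 to handle the blocks coming from negative eigenvalues. The hypothesis that each negative eigenvalue has even multiplicity is exactly what is needed to pair them up into $2\times 2$ blocks, each of which admits a real square root.

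First I would invoke the spectral theorem: since $A$ is real symmetric, there exists a real orthogonal matrix $Q$ such that $A = Q^T D Q$, where $D = \mathrm{diag}(\lambda_1,\ldots,\lambda_n)$. By permuting the columns of $Q$ (which preserves orthogonality), I may assume the eigenvalues are listed so that the non-negative ones appear first, say $\lambda_1,\ldots,\lambda_s\geq 0$, followed by the negative ones $\lambda_{s+1},\ldots,\lambda_n<0$, where each distinct negative value appears an even number of consecutive times. Then
\[
D \;=\; \mathrm{diag}(\lambda_1,\ldots,\lambda_s)\ \oplus\ (\lambda_{s+1} I_2)\ \oplus\ \cdots\ \oplus\ (\lambda_{n-1} I_2),
\]
where the negative eigenvalues have been grouped into pairs (possibly with repetition across pairs).

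Next I would construct a real square root $E$ of $D$ block by block. For each non-negative $\lambda_i$ (with $1\le i\le s$), place $\sqrt{\lambda_i}$ on the diagonal. For each paired block $\lambda I_2$ with $\lambda<0$, write $\lambda = -\mu^2$ with $\mu>0$ and use a matrix of the form $\mu\,\Psi(a,b)$ for arbitrary $a\in\mathbb{R}$, $b\neq 0$. By Lemma 2.2, $\Psi(a,b)^2 = -I_2$, so
\[
\bigl(\mu\,\Psi(a,b)\bigr)^2 \;=\; \mu^2\Psi(a,b)^2 \;=\; -\mu^2 I_2 \;=\; \lambda I_2,
\]
which shows that this block squares correctly. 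Assembling these blocks gives a real matrix $E$ with $E^2 = D$.

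Finally, set $C := Q^T E Q$, which is real because $Q$ and $E$ are real. Then
\[
C^2 \;=\; Q^T E Q\, Q^T E Q \;=\; Q^T E^2 Q \;=\; Q^T D Q \;=\; A,
\]
so $C$ is the desired real root. The only step with any substance is recognizing that the parity condition on multiplicities is exactly what enables the pairing into $2\times 2$ blocks; once the pairing is possible, Lemma 2.2 supplies the root of each negative block essentially for free, and no obstacle remains.
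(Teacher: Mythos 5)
Your proposal is correct and follows essentially the same route as the paper: diagonalize $A$ orthogonally, take $\sqrt{\lambda_i}$ on the non-negative part, and use Lemma 2.2 to write each paired negative block $\lambda I_2$ as $\bigl(\sqrt{-\lambda}\,\Psi(a,b)\bigr)^2$, then conjugate back by $Q$. The only cosmetic difference is that the paper keeps the fixed ordering $\lambda_1\geq\cdots\geq\lambda_n$ (which already places the negative eigenvalues last), so no column permutation is needed.
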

\begin{proof}
 Given $A$ is real symmetric, then by proposition (1.3) there is a real orthogonal matrix $Q$ such that $A=Q^T\mathrm{diag}(\lambda_1,\ldots,\lambda_n)Q$. If $\lambda_n\geq 0$, we have nothing to prove. Now if 
$$
\lambda_1\geq\cdots\geq\lambda_k\geq0>\lambda_{k+1}\geq\cdots\geq\lambda_n,
$$
then by assumption, each $\lambda_j,~~k+1\leq j\leq n $ is repeated twice or even times, therefore 
\begin{align*} 
C=\mathrm{diag}(\lambda_1,\cdots,\lambda_k)\oplus\left(\begin{array}{cc}\lambda_{k+1} & 0\\ 0 &\lambda _{k+1}\end{array}\right)\oplus\cdots\oplus\left(\begin{array}{cc}\lambda_n & 0\\ 0 & \lambda_n \end{array}\right).
\end{align*}  
Thus for all real numbers $a_{k+1},\ldots,a_n$ and non-zero real numbers $b_{k+1},\ldots,b_n$,
\begin{align*} 
D=\mathrm{diag}(\pm\sqrt{\lambda_1},\cdots,\pm\sqrt{\lambda_k})\oplus\sqrt{-\lambda_{k+1}}\Psi(a_{k+1},b_{k+1})
\oplus\cdots\oplus\sqrt{-\lambda_n}\Psi(a_n,b_n),
\end{align*} 
are real roots of $C$ and  $A=(Q^TDQ)^2$.
\end{proof}
In this case, note that this matrix has no symmetrical root.\\
According to proposition (1.4), for any real orthogonal matrix $A$ there exist $\alpha_1,\ldots,\alpha_s$ and $\beta_1\ldots,\beta_t$ such that $A$ is similar to matrix
\begin{align*} 
C&= I_k\oplus-I_l\oplus \left(\begin{array}{cc}\cos\alpha_1 & \sin\alpha_1\\ -\sin\alpha_1 & \cos\alpha_1 \end{array}\right)\oplus\cdots\oplus\left(\begin{array}{cc}\cos\alpha_s & \sin\alpha_s\\ -\sin\alpha_s & \cos\alpha_s \end{array}\right)\\
& \oplus\left(\begin{array}{cc}\cos\beta_1 & \sin\beta_1\\ \sin\beta_1 & -\cos\beta_1 \end{array}\right)\oplus\cdots\oplus\left(\begin{array}{cc}\cos\beta_t & \sin\beta_t\\\sin\beta_t & -\cos\beta_t \end{array}\right).
\end{align*}
We have the following theorem for real orthogonal matrices.
\begin{theorem}
Suppose $A$ is a real orthogonal matrix that is similar to the following matrix
\begin{align*} 
C&= I_k\oplus-I_l\oplus \left(\begin{array}{cc}\cos\alpha_1 & \sin\alpha_1\\ -\sin\alpha_1 & \cos\alpha_1 \end{array}\right)\oplus\cdots\oplus\left(\begin{array}{cc}\cos\alpha_s & \sin\alpha_s\\ -\sin\alpha_s & \cos\alpha_s \end{array}\right),
\end{align*}
where the eigenvalue $-1$ is repeated even times,  then $A$ has a real orthogonal root.
\end{theorem}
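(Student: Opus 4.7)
The strategy is to reduce the problem to finding a real orthogonal square root of the block-diagonal model $C$, and then to build that root block by block. First, by Proposition 1.4 applied to $A$, there is a real orthogonal $Q_0$ such that $Q_0^T A Q_0$ is a direct sum of real orthogonal matrices of order at most $2$. Because this direct sum is similar to $A$ and hence to $C$, and because the $1\times 1$ and $2\times 2$ blocks appearing in such a decomposition are determined up to ordering by the spectrum of $A$ with multiplicities, I would, after absorbing an appropriate permutation matrix into $Q_0$, obtain a real orthogonal $Q$ with $A = Q^T C Q$. It therefore suffices to produce $R \in \mathcal{O}_n$ with $R^2 = C$, for then $A = (Q^T R Q)^2$ and $Q^T R Q$ is again real orthogonal.

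Next I would construct $R$ summand by summand. The block $I_k$ is its own real orthogonal square root. The parity hypothesis on $l$ enters at the $-I_l$ block: since $l$ is even, write $-I_l = (-I_2)\oplus\cdots\oplus(-I_2)$ with $l/2$ copies, and take the root of each $-I_2$ to be $J = \left(\begin{array}{cc} 0 & 1 \\ -1 & 0 \end{array}\right)$, which is real orthogonal and satisfies $J^2 = -I_2$. For each rotation block $\left(\begin{array}{cc}\cos\alpha_i & \sin\alpha_i \\ -\sin\alpha_i & \cos\alpha_i\end{array}\right)$, the half-angle rotation $\left(\begin{array}{cc}\cos(\alpha_i/2) & \sin(\alpha_i/2) \\ -\sin(\alpha_i/2) & \cos(\alpha_i/2)\end{array}\right)$ is real orthogonal and, by the angle-addition formulas, squares to the original block. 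Assembling these pieces,
$$
R \;=\; I_k \;\oplus\; \underbrace{J \oplus \cdots \oplus J}_{l/2 \text{ copies}} \;\oplus\; R(\alpha_1/2) \oplus \cdots \oplus R(\alpha_s/2),
$$
is real orthogonal as a direct sum of real orthogonal blocks, and $R^2 = C$ by construction.

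No deep analytical obstacle arises; the construction is essentially explicit once $C$ is in the stated block form. The one point that must be emphasized is the necessity of the parity hypothesis on $l$: any $1\times 1$ real orthogonal matrix is $\pm 1$, and neither value squares to $-1$, so an unpaired $-1$ eigenvalue would immediately obstruct the existence of a real orthogonal square root. Thus the pairing of the $-1$ eigenvalues into $-I_2$ blocks is the only nontrivial ingredient in the argument, and it is exactly what the hypothesis supplies.
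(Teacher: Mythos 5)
Your proof is correct and follows essentially the same route as the paper: pass to an orthogonal similarity $A=Q^TCQ$, pair the $-1$ eigenvalues (using the evenness of $l$) into $-I_2$ blocks whose real orthogonal root is the rotation through $\pi/2$, and take half-angle rotations for the remaining $2\times 2$ blocks, so that $A=(Q^TRQ)^2$. The only difference is that you spell out, via Proposition 1.4 and the matching of spectra, why the similarity to $C$ may be taken real orthogonal, a step the paper simply asserts.
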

\begin{proof}
 By the assumption, there is a real orthogonal matrix $P$ such that 
\begin{align*} 
C&=P^TAP=I_k\oplus-I_l\oplus \left(\begin{array}{cc}\cos\alpha_1 & \sin\alpha_1\\ -\sin\alpha_1 & \cos\alpha_1 \end{array}\right)\oplus\cdots\oplus\left(\begin{array}{cc}\cos\alpha_s & \sin\alpha_s\\ -\sin\alpha_s & \cos\alpha_s \end{array}\right),\\
&= I_k\oplus \left(\begin{array}{cc}\cos\pi & \sin\pi\\ -\sin\pi & \cos\pi \end{array}\right)\oplus\cdots\oplus\left(\begin{array}{cc}\cos\pi & \sin\pi\\ -\sin\pi & \cos\pi \end{array}\right)\\
& \oplus \left(\begin{array}{cc}\cos\alpha_1 & \sin\alpha_1\\ -\sin\alpha_1 & \cos\alpha_1 \end{array}\right)\oplus\cdots\oplus\left(\begin{array}{cc}\cos\alpha_s & \sin\alpha_s\\ -\sin\alpha_s & \cos\alpha_s \end{array}\right),\\
\end{align*}
if
\begin{align*}
D&= \mathrm{diag}(\pm 1,\cdots,\pm 1)\oplus \left(\begin{array}{cc}\cos\frac{\pi}{2} & \sin\frac{\pi}{2}\\ -\sin\frac{\pi}{2} & \cos\frac{\pi}{2} \end{array}\right)\oplus\cdots\oplus\left(\begin{array}{cc}\cos\frac{\pi}{2} & \sin\frac{\pi}{2}\\ -\sin\frac{\pi}{2} & \cos\frac{\pi}{2} \end{array}\right)\\
& \oplus \left(\begin{array}{cc}\cos\frac{\alpha_1}{2} & \sin\frac{\alpha_1}{2}\\ -\sin\frac{\alpha_1}{2} & \cos\frac{\alpha_1}{2} \end{array}\right)\oplus\cdots\oplus\left(\begin{array}{cc}\cos\frac{\alpha_s}{2} & \sin\frac{\alpha_s}{2}\\ -\sin\frac{\alpha_s}{2} & \cos\frac{\alpha_s}{2} \end{array}\right),
\end{align*}
then $A=PCP^T=PD^2P^T=(PDP^T)^2$, where $PDP^T$ is real orthogonal root of $A$.
\end{proof}
\begin{remark}
If 
\begin{align*}
D_k&= I_r\oplus \left(\begin{array}{cc}\cos\frac{\pi}{2^k} & \sin\frac{\pi}{2^k}\\ -\sin\frac{\pi}{2^k} & \cos\frac{\pi}{2^k} \end{array}\right)\oplus\cdots\oplus\left(\begin{array}{cc}\cos\frac{\pi}{2^k} & \sin\frac{\pi}{2^k}\\ -\sin\frac{\pi}{2^k} & \cos\frac{\pi}{2^k} \end{array}\right)\\
&\oplus \left(\begin{array}{cc}\cos\frac{\alpha_1}{2^k} & \sin\frac{\alpha_1}{2^k}\\ -\sin\frac{\alpha_1}{2^k} & \cos\frac{\alpha_1}{2^k} \end{array}\right)\oplus\cdots\oplus\left(\begin{array}{cc}\cos\frac{\alpha_s}{2^k} & \sin\frac{\alpha_s}{2^k}\\ -\sin\frac{\alpha_s}{2^k} & \cos\frac{\alpha_s}{2^k} \end{array}\right),
\end{align*}
then $D_k\longrightarrow I$ and $(PD_kP^T)^{2^k}=A$, i.e. $A$ is root-approximable.
\end{remark}
\begin{remark}
With the above given if 
 \begin{align*} 
P^TAP&= I_r\oplus -I_l\oplus \left(\begin{array}{cc}\cos\beta_1 & \sin\beta_1\\ \sin\beta_1 & -\cos\beta_1 \end{array}\right)\oplus\cdots\oplus\left(\begin{array}{cc}\cos\beta_t & \sin\beta_t\\\sin\beta_t & -\cos\beta_t \end{array}\right),
\end{align*} 
then $A$ is an involutory matrix.
\end{remark}
\section{Idempotent matrices}
By proposition (1.1), if $P$ is an idempotent  matrix, then it is similar to $\left(\begin{array}{cc}I & O\\ O & O \end{array}\right)$ where $I$ is identity, i.e. there are matrices $A, B, C$ and $D$ such that $A$ and $D$ are square and $A$ and $I$ are of the same size, then  $M=\left(\begin{array}{cc}A & B\\ C & D \end{array}\right)$ is invertible and 
 $$
P=M\left(\begin{array}{cc}I & O\\ O & O \end{array}\right)M^{-1}.
$$
If  $M^{-1}=\left(\begin{array}{cc}X & Y\\ U & V \end{array}\right)$ and $A$ is invertible, then we have 
\begin{align*}
X&=A^{-1}+A^{-1}B(D-CA^{-1}B)^{-1}CA^{-1}, \quad Y=-A^{-1}B(D-CA^{-1}B)^{-1},\\
U&=-(D-CA^{-1}B)^{-1}CA^{-1}, \quad V=(D-CA^{-1}B)^{-1},
\end{align*}
\begin{align*}
P&=\left(\begin{array}{cc}A & B\\ C & D \end{array}\right)\left(\begin{array}{cc}I & O\\ O & O \end{array}\right)\left(\begin{array}{cc}X & Y\\ U & V \end{array}\right)=\left(\begin{array}{cc}AX & AY\\ CX & CY \end{array}\right)\\
&=\left(\begin{array}{cc}I+B(D-CA^{-1}B)^{-1}CA^{-1} & -B(D-CA^{-1}B)^{-1}\\ CA^{-1}+CA^{-1}B(D-CA^{-1}B)^{-1}CA^{-1} & -CA^{-1}B(D-CA^{-1}B)^{-1} \end{array}\right),
\end{align*}
and if $D$ is invertible, we have 
 $$
A^{-1}+A^{-1}B(D-CA^{-1}B)^{-1}CA^{-1}=(A-BD^{-1}C)^{-1}.
$$
Consider $S=(D-CA^{-1}B)^{-1}$ and $T=(A-BD^{-1}C)^{-1}$. We get 
\begin{align*}
P&=\left(\begin{array}{cc}AT & -BS\\ CT & -CA^{-1}BS \end{array}\right)\\
&=\left(\begin{array}{cc}A & O\\ C & O \end{array}\right)\left(\begin{array}{cc}T & O\\ O & O \end{array}\right)-\left(\begin{array}{cc}O & B\\ O & CA^{-1}B \end{array}\right)\left(\begin{array}{cc}O & O\\ O & S \end{array}\right).
\end{align*}
Thus we have proved the following theorem:
\begin{theorem}
Let $A$ and $D$ be two invertible matrices of orders $n$ and $m$ respectively, while $B$ and $C$  are two matrices of orders $n\times m$ and $m\times n$ respectively. Then 
\begin{align*}
P=\left(\begin{array}{cc}A(A-BD^{-1}C)^{-1} & -B(D-CA^{-1}B)^{-1}\\ C(A-BD^{-1}C)^{-1} & -CA^{-1}B(D-CA^{-1}B)^{-1} \end{array}\right)
\end{align*}
is an idempotent.
\end{theorem}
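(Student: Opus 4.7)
The plan is to exploit the similarity construction that the paper has already laid out just before the theorem, together with the trivial observation that $E := \bigl(\begin{smallmatrix} I_n & 0 \\ 0 & 0\end{smallmatrix}\bigr)$ is itself idempotent. If I set $M = \bigl(\begin{smallmatrix} A & B \\ C & D\end{smallmatrix}\bigr)$ and can show $M$ is invertible, then for any matrix of the form $P = MEM^{-1}$ one has $P^{2} = M E^{2} M^{-1} = M E M^{-1} = P$, and the proof reduces to identifying the given block matrix as exactly $MEM^{-1}$.

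First I would verify invertibility of $M$. Since $A$ is invertible, the standard block factorization
\[
M \;=\; \begin{pmatrix} I & 0 \\ CA^{-1} & I \end{pmatrix}\begin{pmatrix} A & 0 \\ 0 & D - CA^{-1}B \end{pmatrix}\begin{pmatrix} I & A^{-1}B \\ 0 & I \end{pmatrix}
\]
shows that $M$ is invertible precisely when the Schur complement $S := D - CA^{-1}B$ is invertible; by the dual factorization using $D$, this is equivalent to $T := A - BD^{-1}C$ being invertible. Both of these are implicitly assumed for the statement of the theorem to make sense, since $S^{-1}$ and $T^{-1}$ appear in the entries of $P$. From the factorization, $M^{-1}$ is the matrix with blocks $X,Y,U,V$ written out in the preamble to the theorem.

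Next I would carry out the product $M E M^{-1}$ directly. Writing $ME = \bigl(\begin{smallmatrix} A & 0 \\ C & 0\end{smallmatrix}\bigr)$, multiplying on the right by $M^{-1}$ immediately yields
\[
MEM^{-1} \;=\; \begin{pmatrix} AX & AY \\ CX & CY \end{pmatrix}\!,
\]
and using $Y = -A^{-1}BS^{-1}$ one has $AY = -BS^{-1}$ and $CY = -CA^{-1}BS^{-1}$, matching the second column of $P$. For the first column, the Sherman–Morrison–Woodbury identity $A^{-1} + A^{-1}B\,S^{-1}CA^{-1} = T^{-1}$ gives $AX = AT^{-1} \cdot \text{(rearranged)} = A(A - BD^{-1}C)^{-1}$ and $CX = C(A-BD^{-1}C)^{-1}$, which is exactly the first column of $P$. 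Hence $P = MEM^{-1}$, and from the opening observation $P^{2} = P$.

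The only potentially delicate point is the Woodbury-style identity used to collapse the $(1,1)$ entry from its Schur-complement expansion down to $A(A-BD^{-1}C)^{-1}$; the paper states this identity without proof, but it is standard and follows from multiplying $(A - BD^{-1}C)$ against the proposed inverse and simplifying. Apart from that, the argument is purely a matter of bookkeeping on $2\times 2$ block products.
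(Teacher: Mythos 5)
Your argument is essentially the paper's own proof: the paper likewise forms $M=\left(\begin{smallmatrix}A & B\\ C & D\end{smallmatrix}\right)$, computes $P=M\left(\begin{smallmatrix}I & O\\ O & O\end{smallmatrix}\right)M^{-1}=\left(\begin{smallmatrix}AX & AY\\ CX & CY\end{smallmatrix}\right)$ from the Schur-complement formulas for $M^{-1}$ together with the identity $A^{-1}+A^{-1}B(D-CA^{-1}B)^{-1}CA^{-1}=(A-BD^{-1}C)^{-1}$, and obtains idempotence because $P$ is similar to the idempotent $\left(\begin{smallmatrix}I & O\\ O & O\end{smallmatrix}\right)$. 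Your explicit LDU check that $M$ is invertible (equivalently that the Schur complements are) merely makes precise a point the paper leaves implicit, so the two proofs coincide in substance.
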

\begin{example}
Let $a, b, c$ and $d$ be real numbers, whereas $bc\neq ad\neq 0$, and  
$$
A=aI_n, B=b\left(\begin{array}{cc}I_m \\ O \end{array}\right), C=c\left(\begin{array}{cc}I_m & O \end{array}\right), D=dI_m, \quad (n\geq m),
$$
\begin{align*}
M=\left(\begin{array}{ccc}aI_n & b\left(\begin{array}{cc}I_m \\ O \end{array}\right)\\ c\left(\begin{array}{cc}I_m & O \end{array}\right) & dI_m \end{array}\right),
\end{align*}
\begin{align*}
S=\frac{a}{ad-bc}I_m,\quad T=\left(\begin{array}{cc}\frac{d}{ad-bc}I_m & O\\ O & \frac{1}{a}I_{n-m} \end{array}\right),
\end{align*}
therefore 
\begin{align*}
P=\frac{1}{ad-bc}\left(\begin{array}{ccc}\left(\begin{array}{cc}{adI_m} & O\\ O & (ad-bc)I_{n-m} \end{array}\right) & \left(\begin{array}{cc}-abI_m \\ O \end{array}\right) \\  \left(\begin{array}{cc}cdI_m & O \end{array}\right) & -bcI_m \end{array}\right)
\end{align*}
is idempotent and  
\begin{align*}
T=2P-I=\frac{2}{ad-bc}\left(\begin{array}{ccc}\left(\begin{array}{cc}{\frac{ad+bc}{2} I_m} & O\\
	 O &\frac{ad-bc}{2} I_{n-m} \end{array}\right) & \left(\begin{array}{cc}-abI_m \\ 
	 O \end{array}\right) \\  \left(\begin{array}{cc}cdI_m & O \end{array}\right) & -\frac{ad+bc}{2}I_m \end{array}\right)
\end{align*}
is an involutory matrix.
\end{example}


\end{document}